\definecolor{red}{rgb}{1,0.1,0.1}%valores de las componentes roja, verde y azul
\definecolor{blue}{rgb}{0.1,0.1,1}
\definecolor{vb}{RGB}{160,32,240}
\theoremstyle{plain}
\newtheorem{theorem}{Theorem}
\newtheorem{remark}{Remark}
\newtheorem{lemma}{Lemma}
\newtheorem{definition}{Definition}
\newcommand{\D}{\mathrm{d}}
\newcommand{\dist}{\mathrm{dist}}
\begin{document}
\title[Existence of positive solutions for a fractional $p$-Laplacian problem]{Existence of positive solutions for a parameter fractional $p$-Laplacian problem with semipositone nonlinearity}

\author[E.~Lopera]{Emer Lopera}
\address{E.~Lopera \\ Universidad Nacional de Colombia, sede Manilzales,\\
	Manizales 170003, Colombia }
\email{edloperar@unal.edu.co}

\author[C.~L\'opez]{Camila López}
\address{C.~L\'opez \\ Universidad Nacional de Colombia, sede Manilzales,\\
	Manizales 170003, Colombia  }
\email{camlopezmor@unal.edu.co}

\author[R.~E.~Vidal]{Ra\'ul E. Vidal}
\address{R.~E.~Vidal \\ FaMAF \\ Universidad Nacional de C\'ordoba \\
	CIEM (CONICET) \\ 5000 C\'ordoba, Argentina}
\email{vidal@famaf.unc.edu.ar}

%	\thanks{The authors are  partially supported by		CONICET and SECYT-UNC}
	
	\subjclass[2020]{35A15, 35R11, 35B51, 35R09}
	
\keywords{mountain pass theorem, semipositone problem, positive solutions, fractional $p$-Laplacian, comparison principles}

\begin{abstract}
	\justifying{In this paper we prove the existence of  at least one positive solution for the nonlocal semipositone problem
		\[
		\displaystyle \left\{\begin{array}{rcll}
			(-\Delta)_p^s(u) &=& \lambda f(u)  \qquad & \text{in} \ \ \Omega \\
			u  &=&   0   & \text{in} \ \ \mathbb{R}^N -\Omega ,        
		\end{array}\right.
		\]
		whenever $\lambda >0$ is a sufficiently small parameter. Here $\Omega \subseteq \mathbb{R}^N$ a bounded domain with $C^{1,1}$ boundary, $2\leqslant p <N$, $s\in (0,1)$ and $f$ superlineal and subcritical. We prove that if $\lambda>0$ is chosen sufficiently small the associated Energy Functional to the problem has a mountain pass structure and, therefore, it has a critical point $u_\lambda$, which is a weak solution. After that we manage to prove that  this solution is positive by using new regularity results up to the boundary and a Hopf's Lemma.}
\end{abstract}

	\maketitle

\section{Introduction}
\noindent
We are interested in the study of the existence of positive solutions to the problem
\begin{equation}\label{eq1}
	\displaystyle \left\{\begin{array}{rcll}
		(-\Delta)_p^s(u) &=& \lambda f(u)  \qquad & \text{in} \ \ \Omega \\
		u  &=&   0   & \text{in} \ \ \mathbb{R}^N -\Omega ,        
	\end{array}\right.
\end{equation}
\noindent
where $N>2$ is an integer, $\Omega \subseteq \mathbb{R}^N$ is a bounded domain with $C^{1,1}$ boundary, $s\in (0,1)$, $1< p$ and $sp<N$ and $\lambda >0$. Besides $f:\mathbb{R} \to \mathbb{R}$ is a continuous function and $(-\Delta )_p^s$ is the $s$-fractional $p$-Laplacian operator defined as \[(-\Delta )_p^s u (x)=2\lim_{\varepsilon \to 0^+} \int_{|x-y|>\varepsilon} \frac{|u(x)-u(y)|^{p-2}(u(x)-u(y))}{|x-y|^{N+sp}} \D y.\]
Let us denote by $p_s^*:=\frac{Np}{N-sp}$ the fractional critical Sobolev exponent. For any Lebesgue measurable set $U\subseteq \mathbb{R}^N$, $|U|$ will stand for the Lebesgue measure of $U$. 
In this work we will assume that there exist  $p-1 <q< \min \{ \frac{sp}{N}p_s^* , p_s^*-1 \} $, $A,B>0$ such that
\begin{equation}\label{eqn4}
	\begin{array}{lll}
		A(s^q-1) \leqslant &f(s)  \leqslant B(s^q+1)      &\text{for} \ \  s >0 \\
		&f(s)=0     &\text{for} \ \ s \leqslant -1 
	\end{array}.
\end{equation}
Let us define \[F(t):=\int_0^t f(s)\D s.\]
Therefore, there exist $A_1, C_1,B_1 >0$ such that
\begin{equation}\label{upper_bound_F}
	F(u)  \leqslant B_1(|u|^{q+1}+1)\qquad \text{for all} \ \  u \in \mathbb{R} 
\end{equation}
and 
\begin{equation}\label{lower_bound_F}
	A_1(u^{q+1}-C_1) \leqslant  F(u)   \qquad    \text{for all} \ \ u \geqslant 0 .
\end{equation}
Let us also assume that $f$ satisfies an Ambrosetti-Rabinowitz type condition. More specifically, we will assume that there exist $\theta >p$ and $M\in \mathbb{R}$ such that for all $s\in \mathbb{R}$, 
\begin{equation}\label{eq3}
	sf(s)  \geqslant \theta F(s)+M.   
\end{equation}
\begin{remark}
	The existence of at least one solution to our problem can be stated under the assumption $q\in (p-1 , \, p_s^*-1)$. The restriction $p-1 <q< \min \{ \frac{sp}{N}p_s^* , p_s^*-1 \} $ is necessary to prove the positiveness of this.     
\end{remark}
The aim of this paper is to prove the following result. 
\begin{theorem}[Main Theorem]
	Let us assume that $\Omega$ is a bounded domain with $C^{1,1}$ boundary. Then  there is $\lambda _0 >0$ such that for all $\lambda \in (0, \lambda_0)$ problem \eqref{eq1} has at least one positive weak solution $u_\lambda \in C^{\alpha}(\overline{\Omega})$, for some $\alpha \in (0,1).$
\end{theorem}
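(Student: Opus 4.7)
The plan is to apply the Mountain Pass Theorem to the natural energy functional associated to \eqref{eq1},
\[
J_\lambda(u) = \frac{1}{p}\iint_{\mathbb{R}^{2N}} \frac{|u(x)-u(y)|^p}{|x-y|^{N+sp}}\,\D x\,\D y - \lambda\int_\Omega F(u)\,\D x,
\]
on the fractional Sobolev space $W_0^{s,p}(\Omega)$, and then to analyze the resulting critical point and show that, when $\lambda$ is small, it is strictly positive. The argument breaks into four stages: mountain pass geometry, Palais--Smale, regularity, positivity.

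First I would verify the mountain pass geometry. The upper bound \eqref{upper_bound_F}, combined with the fractional Sobolev embedding $W_0^{s,p}(\Omega)\hookrightarrow L^{q+1}(\Omega)$ (which holds since $q+1<p_s^*$), gives $J_\lambda(u)\geq \frac{1}{p}\|u\|^p-\lambda C\bigl(\|u\|^{q+1}+1\bigr)$; since $q+1>p$, fixing a small radius $r>0$ and then taking $\lambda_0>0$ sufficiently small forces $J_\lambda\big|_{\{\|u\|=r\}}\geq \rho>0$ for every $\lambda\in(0,\lambda_0)$. For the second mountain pass condition, pick any nonnegative $\phi\in W_0^{s,p}(\Omega)\setminus\{0\}$; the lower bound \eqref{lower_bound_F} and $q+1>p$ yield $J_\lambda(t\phi)\to-\infty$ as $t\to+\infty$. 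The Ambrosetti--Rabinowitz condition \eqref{eq3} then gives boundedness of $(\mathrm{PS})$ sequences by the standard computation, and the compact embedding into $L^{q+1}(\Omega)$ upgrades this to strong convergence in $W_0^{s,p}(\Omega)$. The Mountain Pass Theorem therefore produces a nontrivial weak solution $u_\lambda$ at a strictly positive critical level.

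Next I would handle regularity: a Moser-type iteration for the fractional $p$-Laplacian, leveraging the subcritical growth of $f$, yields $u_\lambda\in L^\infty(\Omega)$ with a bound that is uniform for $\lambda$ in a bounded interval; this is where the restriction $q<\frac{sp}{N}p_s^*$ is used. Recent boundary regularity results for the fractional $p$-Laplacian (in the spirit of Iannizzotto--Mosconi--Squassina) then upgrade this to $u_\lambda\in C^{\alpha}(\overline\Omega)$ for some $\alpha\in(0,1)$, and even give information about $u_\lambda/d_\Omega^{\,s}$ near $\partial\Omega$, where $d_\Omega=\dist(\,\cdot\,,\partial\Omega)$.

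The main obstacle is positivity, which is nontrivial because \eqref{eqn4} allows $f$ to be negative on a neighborhood of $0$. I would test the weak formulation against the negative part $u_\lambda^-$ and use the well-known pointwise inequality $|a-b|^{p-2}(a-b)(a_--b_-)\leq -|a_--b_-|^p$ to obtain $[u_\lambda^-]_{s,p}^p\leq C\lambda$, where the constant comes from the fact that $f\equiv 0$ on $(-\infty,-1]$ and hence the right-hand side of the tested identity is supported on $\{-1<u_\lambda<0\}$ where $|f(u_\lambda)|$ is uniformly bounded. Combining this with the uniform $L^\infty$ bound, one concludes that $\|u_\lambda^-\|_\infty\to 0$ as $\lambda\to 0^+$, so for small $\lambda$ the solution $u_\lambda$ can be compared with a carefully chosen sub-solution (a suitable multiple of a torsion-type function behaving like $d_\Omega^{\,s}$ at the boundary), and a nonlocal Hopf lemma for $(-\Delta)_p^s$ in $C^{1,1}$ domains then forces $u_\lambda>0$ in $\Omega$. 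Making the comparison-plus-Hopf step rigorous for the nonlinear and nonlocal operator $(-\Delta)_p^s$, where additive decompositions are not available, is where the technical heart of the proof lies, and where both the $C^{1,1}$ regularity of $\partial\Omega$ and the bound $q<\frac{sp}{N}p_s^*$ enter essentially.
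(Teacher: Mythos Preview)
Your existence and regularity outline is essentially fine, but the positivity argument has a genuine gap and diverges from the paper's strategy in an essential way.

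First, a factual point: the $L^\infty$ norm of $u_\lambda$ is \emph{not} uniformly bounded as $\lambda\to 0^+$. In the paper's setup the mountain pass level satisfies the two–sided estimate $c_1\lambda^{-rp}\leqslant E_\lambda(u_\lambda)\leqslant c_2\lambda^{-rp}$ (obtained by taking the barrier sphere at the $\lambda$--dependent radius $\tau\lambda^{-r}$ rather than at a fixed radius as you do), and this forces $\|u_\lambda\|_\infty\sim\lambda^{-r}\to\infty$. Your ``uniform $L^\infty$ bound'' is therefore incorrect, and in fact the blow–up rate is the engine of the positivity proof.

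Second, your positivity step does not close. The inequality $[u_\lambda^-]_{s,p}^p\leqslant C\lambda$ is true, but it only yields $u_\lambda^-\to 0$ in $L^{p_s^*}(\Omega)$; there is no mechanism in what you wrote to upgrade this to $\|u_\lambda^-\|_\infty\to 0$. Even granting that, the sentence ``compare with a carefully chosen sub-solution and apply a nonlocal Hopf lemma'' is not a proof: since $f(u_\lambda)$ may be negative wherever $u_\lambda$ is small, $u_\lambda$ is \emph{not} a weak supersolution of $(-\Delta)_p^s v=0$, so no comparison principle or Hopf lemma applies to $u_\lambda$ directly, and for the nonlinear operator one cannot simply subtract off a small negative part.

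The paper's route is a rescaling/compactness argument rather than a direct comparison. One sets $w_\lambda=u_\lambda/\|u_\lambda\|_\infty$, so that
\[
(-\Delta)_p^s w_\lambda=\lambda\,\|u_\lambda\|_\infty^{1-p}\,f(u_\lambda),
\]
and the two–sided estimate $\|u_\lambda\|_\infty\sim\lambda^{-r}$ makes this right–hand side bounded in $L^\infty(\Omega)$; its negative part, which is at most $C\lambda\,\|u_\lambda\|_\infty^{1-p}$, tends to $0$. Fine boundary regularity gives $w_\lambda/\D_\Omega^{\,s}$ bounded in $C^\alpha(\overline\Omega)$, hence (along a subsequence) convergent in $C^\beta(\overline\Omega)$ to some $w/\D_\Omega^{\,s}$, and one shows that $w\geqslant 0$ solves $(-\Delta)_p^s w=z$ weakly with $z\geqslant 0$. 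Now the strong minimum principle and the Hopf lemma for the fractional $p$-Laplacian apply to the \emph{limit} $w$, giving $w>0$ in $\Omega$ and $\liminf_{x\to x_0} w(x)/\D_{B_R}^{\,s}(x)>0$ at every boundary point. Finally, the uniform $C^\beta$ convergence of $w_\lambda/\D_\Omega^{\,s}$ to the strictly positive limit $w/\D_\Omega^{\,s}$ on $\overline\Omega$ forces $w_\lambda>0$ in $\Omega$ for $\lambda$ small, which is the desired contradiction. The missing idea in your proposal is precisely this normalization-and-limit step, together with the quantitative mountain pass bounds that make it work.
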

This result extends the one in \cite{CdFL} where the authors considered the problem for the $p$-Laplacian operator, ($2\leqslant p <N$). The difficulties to prove the positiveness of the solutions for Dirichlet problems with semipositone type nonlinearities are well documented, see for example \cite{BS}, \cite{CCSU} and references therein. Such issues persist in the nonlocal case.  To the best of our knowledge this is the first result on the existence of positive solutions for a semipositone nonlinearity with the fractional $p$-Laplacian.
In \cite{DT}, the authors studied the problem \eqref{eq1} with $p=2$, $f(u)=u^q-1$, (semipositone) but $0<q<1$. Indeed, they proved the existence of at least one positive solution if $\lambda >0$ is sufficiently large. In \cite{AHS}, the authors proved the existence of positive solutions of a problem of semipositone type for the $\Phi$-Laplacian through Orlicz-Sobolev spaces. \\
\noindent
Throughout this paper, $C$ will denote positive constant, not the same at each occurrence.

\section{Fractional frame}
\begin{definition} Let $s \in (0,1)$ and $1 \leq p < \infty$ and let
	\[
	W^{s,p}(\mathbb{R}^N):=\left\{u \in L^p(\mathbb{R}^N) : \int_{\mathbb{R}^{2N}}\frac{|u(x)-u(y)|^p}{|x-y|^{N+sp}}\D x \D y < \infty\right\}\]
	be the fractional Sobolev space endowed with the norm 
	\[\|u\|_{s,p}=(\|u\|_p^p+[u]_{s,p}^p)^{1/p},\]
	where \[[u]^p_{s,p}:=\int_{\mathbb{R}^{2N}}\frac{|u(x)-u(y)|^p}{|x-y|^{N+sp}}\D x\D y,\]
	is the Gagliardo seminorm and for every $1\leqslant q \leqslant \infty$, $\|\cdot \|_q$ is the norm in $L^q(\Omega)$.
\end{definition}
With this norm, $W^{s,p}(\mathbb{R}^N)$ is a Banach space.
We shall work in the closed subspace
\[W_0^{s,p}(\Omega):=\left\{u \in W^{s,p}(\mathbb{R}^N): u =0 ~\textrm{a.e in}~ \mathbb{R}^N - \Omega\right\}\]
which can be equivalently renormed by setting $\|u\|=[u]_{s,p}$. The equivalence of this norms is a consequence of the Sobolev embedding theorem (see \cite{DPV}).\\
Let us set for all $s\in \mathbb{R}$
\[\Phi_p(s)=|s|^{p-2}s. \]
A weak solution to the problem (\ref{eq1})
is a function $u \in W_0^{s,p}(\Omega)$ such that for all $\varphi \in W_0^{s,p}(\Omega)$
\begin{equation*}
	\int_{\mathbb{R}^{2N}}\frac{\Phi_p(u(x)-u(y))(\varphi (x)-\varphi (y))}{|x-y|^{N+sp}}\D x \D y =\lambda \int_{\Omega}f( u) \varphi  \D x    . 
\end{equation*}
We shall give to this problem a variational approach. Then, for each $\lambda>0$ let us define the functional $E_\lambda : W^{s,p}_0 (\Omega) \to \mathbb{R}$ as
\begin{equation}
	E_\lambda (u)=\frac{1}p \int _{\mathbb{R}^{2N}} \frac{|u(x)-u(y)|^p}{|x-y|^{N+sp}} \D x \, \D y- \lambda \int _{\Omega} F(u) \D x .
\end{equation}
Observe that $E_\lambda (u):=\frac{1}p\|u\|^p-\lambda \int _{\Omega} F(u) \D x$. 
It is well known that $E_\lambda \in C^1$ and its derivative is given by
\begin{equation}\label{deriv-E}
	\langle E'_{\lambda} u, \, \varphi  \rangle    =\int _{\mathbb{R}^{2N} }   \frac{\Phi_p(u(x)-u(y))(\varphi (x)-\varphi  (y))}{|x-y|^{N+sp}} \D x \, \D y - \lambda \int _{\Omega} f(u) \varphi  \D x .
\end{equation}
Therefore, the critical points of $E_\lambda$ turns out to be the weak solutions of problem \eqref{eq1}.

\section{Preliminary results}
\noindent 
In this section we shall establish some lemmas that guarantee that $E_\lambda$ has a critical point, $u_\lambda$, whenever $\lambda>0$ is sufficiently small. After that, we present some lemmas concerning the regularity of $u_\lambda$. Finally we prove our main result.
The positive number 
\[r:=\frac{1}{q+1-p} ,\]
will be use repeatedly throughout this paper. Let $\varphi \in W_0^{s,p}(\Omega)$ be a positive function with $\| \varphi \|=1$ and let  
\[c:=\bigg(\frac{2}{pA_1\|\varphi\|_{q+1}^{q+1}}\bigg)^r>0.\]
Finally, let us define $\D _\Omega (x):=\dist (x, \Omega^c)$, for all $x\in \mathbb{R}^N$.
\begin{lemma}\label{lema1}
	There exists $\lambda_1 >0$ such that if $\lambda \in (0, \lambda_1)$ then $E_\lambda (c\lambda ^{-r}\varphi) \leqslant 0 $.
\end{lemma}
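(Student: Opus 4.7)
The plan is a direct computation: substitute $u=c\lambda^{-r}\varphi$ into $E_\lambda$, bound the nonlinear term using the lower estimate \eqref{lower_bound_F}, and then exploit the fact that the exponent $r=\frac{1}{q+1-p}$ and the constant $c$ have been chosen precisely to make the two dominant terms (both of order $\lambda^{-rp}$) combine into something strictly negative.

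First, since $\|\varphi\|=1$, expand
\[
E_\lambda(c\lambda^{-r}\varphi) = \tfrac{1}{p}c^{p}\lambda^{-rp} - \lambda\int_\Omega F(c\lambda^{-r}\varphi)\,\D x.
\]
Next, because $\varphi \geq 0$ on $\Omega$, inequality \eqref{lower_bound_F} applies pointwise with $u = c\lambda^{-r}\varphi \geq 0$, giving
\[
-\lambda\int_\Omega F(c\lambda^{-r}\varphi)\,\D x \;\leq\; -\lambda A_1 c^{q+1}\lambda^{-r(q+1)}\|\varphi\|_{q+1}^{q+1} \;+\; \lambda A_1 C_1 |\Omega|.
\]

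Now the key algebraic identity is $r(q+1-p)=1$, so $\lambda\cdot \lambda^{-r(q+1)} = \lambda^{-rp}$, and both dominant terms acquire the same power of $\lambda$. Using the definition of $c$, which is equivalent to $A_1 c^{q+1}\|\varphi\|_{q+1}^{q+1} = \frac{2 c^{p}}{p}$, the two dominant terms collapse to
\[
\left(\tfrac{1}{p}c^{p} - A_1 c^{q+1}\|\varphi\|_{q+1}^{q+1}\right)\lambda^{-rp} = -\tfrac{c^{p}}{p}\lambda^{-rp}.
\]
One therefore obtains the clean inequality
\[
E_\lambda(c\lambda^{-r}\varphi) \;\leq\; -\tfrac{c^{p}}{p}\lambda^{-rp} \;+\; \lambda A_1 C_1 |\Omega|.
\]

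Finally, since $1+rp>0$, the right-hand side becomes non-positive once $\lambda^{1+rp}\leq \frac{c^{p}}{pA_1 C_1|\Omega|}$. Defining
\[
\lambda_1 := \left(\frac{c^{p}}{pA_1 C_1 |\Omega|}\right)^{\!1/(1+rp)}
\]
yields the claim for every $\lambda\in(0,\lambda_1)$. There is no genuine analytic obstacle; the only thing requiring care is tracking the exponents of $\lambda$ and verifying that the normalization of $c$ produces a negative leading coefficient. The Sobolev embedding $W_0^{s,p}(\Omega)\hookrightarrow L^{q+1}(\Omega)$, valid because $q+1<p_s^{*}$, is what guarantees the finiteness of $\|\varphi\|_{q+1}$ used in defining $c$.
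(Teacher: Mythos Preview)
Your argument is correct and follows the same computation as the paper: expand $E_\lambda(c\lambda^{-r}\varphi)$, apply \eqref{lower_bound_F}, use $r(q+1-p)=1$ together with the normalization of $c$ to combine the dominant terms, and absorb the remainder $\lambda A_1C_1|\Omega|$ by taking $\lambda$ small. The only cosmetic difference is that the paper chooses the slightly smaller threshold $\lambda_1=\bigl(\tfrac{c^p}{2pA_1C_1|\Omega|}\bigr)^{1/(1+rp)}$, which yields the stronger bound $E_\lambda(c\lambda^{-r}\varphi)\leq -\tfrac{c^p}{2p}\lambda^{-rp}$; this refinement is not needed for the lemma as stated.
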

\begin{proof} Let $l=c \lambda^{-r}$. From the growth behaviour of $F$ (see \eqref{lower_bound_F}) and the fact that $\|\varphi\|=1$  we have 
	\begin{align}\label{E<0}
		\begin{split}
			E_{\lambda}(l \varphi) & =\frac{1}{p}\|l \varphi\|^p- \lambda \int_{\Omega} F(l \varphi) \D x\\
			& \leqslant \frac{l^p}{p}\| \varphi\|^p- \lambda A_1 l^{q+1} \int_{\Omega} \varphi^{q+1} \D x+ \lambda A_1C_1 |\Omega|\\
			& \leqslant \frac{l^p}{p}- \lambda A_1 l^{q+1}\|\varphi\|^{q+1}_{q+1}+\lambda A_1C_1 |\Omega| .
		\end{split}
	\end{align}
	Thus, if $0<\lambda < \big(\frac{c^p}{2pA_1C_1|\Omega|}\big)^{1/(1+rp)}=:\lambda_1$, then 
	\begin{equation}
		E_{\lambda}(l \varphi) \leqslant -\frac{c^p}{2p}\lambda^{-rp} \leqslant 0 . 
	\end{equation}
\end{proof}

\begin{lemma}\label{lema2}
	There exist $\tau>0$, $c_1>0$ and  $0<\lambda_2<1 $ such that if $\|u\|=\tau \lambda ^{-r}$ then $E_\lambda (u) \geq c_1(\tau \lambda^{-r})^p$ for all $\lambda \in (0, \lambda_2)$.
\end{lemma}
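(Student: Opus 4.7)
The plan is to bound $E_\lambda(u)$ from below on the sphere $\|u\|=\tau\lambda^{-r}$ by using the upper growth bound on $F$ together with a Sobolev embedding, and then to exploit the scaling identity $1-r(q+1)=-rp$ that the choice $r=1/(q+1-p)$ was designed to produce.

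First I would invoke the upper bound \eqref{upper_bound_F} on $F$ to write
\[
E_\lambda(u)\;\geqslant\;\frac{1}{p}\|u\|^p-\lambda B_1\int_\Omega |u|^{q+1}\,\D x-\lambda B_1|\Omega|.
\]
Since $p-1<q<p_s^*-1$ we have $p<q+1<p_s^*$, so the Sobolev embedding $W_0^{s,p}(\Omega)\hookrightarrow L^{q+1}(\Omega)$ (see \cite{DPV}) provides a constant $S>0$ with $\|u\|_{q+1}\leqslant S\|u\|$. Hence
\[
E_\lambda(u)\;\geqslant\;\frac{1}{p}\|u\|^p-\lambda B_1 S^{q+1}\|u\|^{q+1}-\lambda B_1|\Omega|.
\]

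Next I would substitute $\|u\|=\tau\lambda^{-r}$ and use the key algebraic identity $1-r(q+1)=-rp$, which follows immediately from $r=1/(q+1-p)$. This yields
\[
E_\lambda(u)\;\geqslant\;\lambda^{-rp}\!\left(\frac{\tau^p}{p}-B_1 S^{q+1}\tau^{q+1}\right)-\lambda B_1|\Omega|.
\]
Because $q+1>p$, I can now choose $\tau>0$ so small that $B_1 S^{q+1}\tau^{q+1-p}\leqslant \tfrac{1}{2p}$, giving
\[
E_\lambda(u)\;\geqslant\;\frac{\tau^p}{2p}\lambda^{-rp}-\lambda B_1|\Omega|.
\]
Finally I would absorb the lower-order term by setting, say, $c_1=\tfrac{1}{4p}$ and picking
\[
\lambda_2:=\min\!\left\{1,\left(\frac{\tau^p}{4pB_1|\Omega|}\right)^{1/(1+rp)}\right\},
\]
so that for every $\lambda\in(0,\lambda_2)$ one has $\lambda B_1|\Omega|\leqslant \tfrac{\tau^p}{4p}\lambda^{-rp}$, and therefore
\[
E_\lambda(u)\;\geqslant\;\frac{\tau^p}{4p}\lambda^{-rp}\;=\;c_1(\tau\lambda^{-r})^p.
\]

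There is no serious obstacle: the argument is just a careful bookkeeping of exponents. The only nontrivial step is recognizing that the exponent $r=1/(q+1-p)$ is precisely the one that balances the nonlinear and linear terms, making both of the leading contributions scale like $\lambda^{-rp}$; this is exactly what will mesh with Lemma \ref{lema1} (where $c\lambda^{-r}\varphi$ sits outside this sphere for small $\lambda$) to produce the mountain pass geometry in the next step.
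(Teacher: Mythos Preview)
Your proof is correct and follows essentially the same route as the paper: bound $F$ via \eqref{upper_bound_F}, apply the Sobolev embedding $W_0^{s,p}(\Omega)\hookrightarrow L^{q+1}(\Omega)$, use the scaling identity $1-r(q+1)=-rp$, then pick $\tau$ small and finally $\lambda_2$ small; even the constant $c_1=\tfrac{1}{4p}$ coincides. The only difference is that the paper additionally imposes $\tau\leqslant c$ (with $c$ the constant from Lemma~\ref{lema1}), which is not needed for the lemma as stated but guarantees that $c\lambda^{-r}\varphi$ lies outside the sphere $\|u\|=\tau\lambda^{-r}$ in the subsequent mountain-pass step---a point you already anticipated in your closing remark.
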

\begin{proof}
	Let $u \in W_0^{s,p}(\Omega)$ with $\|u\|=\lambda^{-r}\tau$, by the Sobolev embedding theorem, there exists $K_1 >0$ such that for all $v\in W_0^{s,p}(\Omega)$, $\|v\|_{q+1}\leq K_1 \|v\|$, define $\tau = \min  \{(2pK_1^{q+1}B_1)^{-r}, c\}$ then,
	\begin{align*}
		E_{\lambda}(u) &= \frac{1}{p}\|u\|^p- \lambda \int_{\Omega}F(u) \D x\\
		& \geq \frac{1}{p}(\lambda^{-r}\tau)^p- \lambda B_1 \|u\|_{q+1}^{q+1}- \lambda B_1|\Omega|\\
		& \geq \frac{1}{p}(\lambda^{-r}\tau)^p-\lambda B_1(K_1\|u\|)^{q+1}- \lambda B_1 |\Omega|\\
		& =  \frac{1}{p}(\lambda^{-r}\tau)^p-\lambda B_1K_1^{q+1}(\lambda^{-r}\tau)^{q+1}- \lambda B_1 |\Omega|\\
		& \geq \lambda^{-rp}\left(\frac{\tau^p}{2p}-\lambda^{1+rp}|\Omega|B_1 \right)\\
		& \geq \lambda^{-rp}\frac{\tau^p}{4p}
	\end{align*}
	taking $c_1=\frac{1}{4p}$ and $\lambda_2:=\tau^{p/(1+rp)}(4pB_1|\Omega|)^{-1/(1+rp)}$ we obtain the result. 
\end{proof}

\begin{lemma}\label{critical-point}
	Let $\lambda_3=\min \{ \lambda_1 , \lambda_2 \}$. Then, there exists a constant $c_2 >0$ such that for all $\lambda \in (0, \lambda_3) $ the functional $E_\lambda$ has a critical point $u_\lambda$ which satisfies 
	\begin{equation*}
		c_1 \lambda^{-rp} \leqslant E_\lambda(u_\lambda) \leqslant c_2 \lambda^{-rp} ,
	\end{equation*}
	where $c_1>0$ is the constant given in Lemma \ref{lema2}.
\end{lemma}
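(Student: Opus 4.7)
The plan is to obtain $u_\lambda$ via the classical Mountain Pass Theorem of Ambrosetti--Rabinowitz applied to $E_\lambda$ on $W_0^{s,p}(\Omega)$. Lemmas \ref{lema1} and \ref{lema2} already supply the mountain-pass geometry for $\lambda\in(0,\lambda_3)$: one has $E_\lambda(0)=0$, $\inf_{\|u\|=\tau\lambda^{-r}}E_\lambda\geq c_1(\tau\lambda^{-r})^p>0$, and $E_\lambda(c\lambda^{-r}\varphi)\leq 0$ at a point that lies strictly outside the sphere, since necessarily $\tau<c$ (otherwise Lemmas \ref{lema1} and \ref{lema2} would contradict each other at $u=c\lambda^{-r}\varphi$). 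Setting
\[
c(\lambda):=\inf_{\gamma\in\Gamma}\max_{t\in[0,1]}E_\lambda(\gamma(t)),\qquad \Gamma:=\{\gamma\in C([0,1],W_0^{s,p}(\Omega)): \gamma(0)=0,\ \gamma(1)=c\lambda^{-r}\varphi\},
\]
the continuity of $t\mapsto\|\gamma(t)\|$ and the intermediate value theorem force every admissible path to meet the separating sphere, immediately yielding $c(\lambda)\geq c_1\tau^p\lambda^{-rp}$.

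To apply the Mountain Pass Theorem I would next verify the Palais--Smale condition at any level. If $E_\lambda(u_n)$ is bounded and $E_\lambda'(u_n)\to 0$, the Ambrosetti--Rabinowitz hypothesis \eqref{eq3} gives the standard estimate
\[
\Big(\tfrac{1}{p}-\tfrac{1}{\theta}\Big)\|u_n\|^p\leq E_\lambda(u_n)-\tfrac{1}{\theta}\langle E_\lambda'(u_n),u_n\rangle-\tfrac{\lambda M}{\theta}|\Omega|,
\]
so $(u_n)$ is bounded in $W_0^{s,p}(\Omega)$. Extracting a weakly convergent subsequence $u_n\rightharpoonup u$, the compact fractional Sobolev embedding $W_0^{s,p}(\Omega)\hookrightarrow L^{q+1}(\Omega)$ (valid because $q+1<p_s^*$) combined with the growth bound \eqref{eqn4} gives $\int_\Omega f(u_n)(u_n-u)\,\D x\to 0$. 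Since $\langle E_\lambda'(u_n),u_n-u\rangle\to 0$, the duality pairing of $(-\Delta)_p^s u_n$ against $u_n-u$ vanishes in the limit, and the $(S_+)$-property of the fractional $p$-Laplacian (available in the literature) upgrades weak convergence to strong convergence in $W_0^{s,p}(\Omega)$.

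The Mountain Pass Theorem then produces a critical point $u_\lambda$ with $E_\lambda(u_\lambda)=c(\lambda)\geq c_1\tau^p\lambda^{-rp}$, which after absorbing $\tau^p$ into the constant proves the stated lower bound. For the upper bound I would test $c(\lambda)$ against the linear path $\gamma_0(t)=tc\lambda^{-r}\varphi$, $t\in[0,1]$. Using $\|\varphi\|=1$, \eqref{lower_bound_F}, and the crucial identity $r(q+1-p)=1$ (which makes the $\lambda$ factors on the $t^{q+1}$ term collapse to $\lambda^{-rp}$),
\[
E_\lambda(\gamma_0(t))\leq \lambda^{-rp}\left(\tfrac{t^p c^p}{p}-A_1 t^{q+1}c^{q+1}\|\varphi\|_{q+1}^{q+1}\right)+\lambda A_1 C_1|\Omega|,
\]
whose maximum over $t\in[0,1]$ is at most $c_2\lambda^{-rp}$ after possibly shrinking $\lambda_3$, because the polynomial in parentheses has a finite positive maximum (since $q+1>p$) and the additive $\lambda A_1C_1|\Omega|$ is of lower order in $\lambda^{-rp}$. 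The main obstacle is the Palais--Smale verification: while the boundedness step is routine, upgrading weak to strong convergence in $W_0^{s,p}(\Omega)$ rests on the $(S_+)$-property of the nonlocal operator $(-\Delta)_p^s$, which is substantially more delicate than in the local case and is the technical input that makes the whole variational scheme go through.
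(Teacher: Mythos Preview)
Your proposal is correct and follows essentially the same route as the paper: mountain-pass geometry from Lemmas \ref{lema1}--\ref{lema2}, Palais--Smale via the AR condition plus compact embedding and an $(S_+)$-type argument (which the paper carries out by hand, showing $(\|u_n\|^{p-1}-\|u\|^{p-1})(\|u_n\|-\|u\|)\to 0$ via H\"older, rather than citing the property), and the level bounds by testing against the linear path $t\mapsto tc\lambda^{-r}\varphi$. For the upper bound the paper is actually cruder---it simply drops the negative $A_1$-term in \eqref{E<0} to get $E_\lambda(l\varphi)\leq \tfrac{l^p}{p}+\lambda A_1C_1|\Omega|$---and no shrinking of $\lambda_3$ is needed, since $\lambda_2<1$ already yields $\lambda A_1C_1|\Omega|\leq A_1C_1|\Omega|\,\lambda^{-rp}$.
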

\begin{proof}
	First of all, we will prove that $E_\lambda$ satisfies the Palais-Smale condition. Let us assume that $\{u_n\}$ is a sequence in $W_0^{s,p}(\Omega)$ such that $\{E_{\lambda}(u_n) \}$ is bounded and $E'_\lambda (u_n) \to 0,$ as $n \to \infty .$ Hence, there exists $\nu >0$ such that for all $n > \nu$
	
	\begin{equation*}
		| \langle E_{\lambda}'(u_n),u_n \rangle| \leqslant \|u_n\|.
	\end{equation*}
	Moreover, from \eqref{deriv-E} we have
	\begin{equation}\label{eq10}
		-\|u_n\|^p-\|u_n\|\leqslant -\lambda \int_{\Omega} f(u_n)u_n \D x , \quad \text{for all} \, n > \nu .
	\end{equation}
	Let $K>0$ such that for all $n$, $|E_\lambda (u_n)|\leqslant K$. From the Ambrosetti-Rabinowitz condition (equation \eqref{eq3}) we see that
	\begin{align}\label{eq11}
		\begin{split}
			\frac{1}{p}\|u_n\|^{p}-\frac{\lambda}{\theta}\int_{\Omega}f(u_n)u_n \D x+\frac{\lambda}{\theta}M|\Omega|
			& \leqslant \frac{1}{p}\|u_n\|^{p}-\lambda\int_{\Omega}F(u_n)\D x \\
			&  \leqslant K.
		\end{split}
	\end{align}
	
	Using (\ref{eq10}) and (\ref{eq11}) we obtain
	\[
	\left(\frac{1}{p}-\frac{1}{\theta} \right) \|u_n\|^p-\frac{1}{\theta}\|u_n\| 
	\leqslant K-\frac{\lambda}{\theta}M |\Omega|,
	\]
	which proves that $\{u_n\}$ is bounded in $W_0^{s,p}(\Omega)$. Therefore, up to a sub-sequence, $\{u_{n}\}$ converges weakly to the function $u\in W_0^{s,p}(\Omega)$. Since $p<q+1<p_s^*$, then $u_n \to u$ (strongly) in $L^{q+1}(\Omega)$. Applying the Hölder inequality this implies that
	\begin{equation*}\label{eq12}
		\lim_{n\to \infty} \lambda\int_{\Omega}f(u_n)(u_n-u)\D x = 0.
	\end{equation*}
	Then, since $\lim_{n \to \infty}E'_{\lambda}(u_n)=0$, we have
	\begin{equation}\label{eq13}
		\lim_{n\to \infty} \int_{\mathbb{R}^{2N}} \frac{\Phi_p(u_n(x)-u_n(y))((u_n-u)(x)-(u_n-u)(y))}{|x-y|^{N+sp}}=0.
	\end{equation}
	Using again that $u$ is the weak limit of $u_n$ we have
	\begin{equation}\label{eq14}
		\lim_{n \to \infty}\int_{\mathbb{R}^{2N}} \frac{\Phi_p(u(x)-u(y))((u_n-u)(x)-(u_n-u)(y))}{|x-y|^{N+sp}}=0.
	\end{equation}
	On the other hand, taking into account the Hölder inequality, we see that
	\begin{align*}
		& \int_{\Omega} \frac{\Phi_p(u_n(x)-u_n(y))-\Phi_p(u(x)-u(y))}{|x-y|^{N+sp}}   ((u_n-u)(x)-(u_n-u)(y)) \D x \D y  & \\
		&    = \int_{\Omega} \left[\frac{|u_n(x)-u_n(y)|^{p}}{|x-y|^{N+sp}}- \frac{\Phi_p(u_n(x)-u_n(y))(u(x)-u(y))}{|x-y|^{N+sp}} \right.\\
		&   \left.-\frac{\Phi_p(u(x)-u(y))(u_n(x)-u_n(y))}{|x-y|^{N+sp}}+ \frac{|u(x)-u(y)|^p}{|x-y|^{N+sp}} \right] \D x \D y\\
		& \geqslant \|u_n\|^p-\|u_n\|^{p-1}\|u\|-\|u_n\|\|u\|^{p-1}+\|u\|^p\\
		& = ([\|u_n\|^{p-1}-\|u\|^{p-1})(\|u_n\|-\|u\|) \geqslant 0 .
	\end{align*}
	From (\ref{eq13}), (\ref{eq14})  we obtain  
	\begin{equation*}
		\lim_{n \to \infty} (\|u_n\|^{p-1}-\|u\|^{p-1})(\|u_n\|-\|u\|)=0 ,
	\end{equation*}
	which implies
	\begin{equation*}
		\lim_{n \to \infty}\|u_n\|=\|u\| .
	\end{equation*}
	Since  $u_n \rightharpoonup u$, then $u_n \to u$ strongly in $W_0^{s,p}(\Omega)$. This proves that $E_{\lambda}$ satisfies the Palais-Smale condition.\\
	Let us observe that, from \eqref{E<0}, for all $0\leqslant l \leqslant c\lambda ^{-r}$
	\[E_\lambda (l\phi ) \leqslant \frac{l^p}p+\lambda A_1C_1 |\Omega| \leqslant \frac{c^p}p \lambda^{-rp}+ A_1C_1 |\Omega|\lambda^{-rp} =c_2\lambda^{-rp} . \]
	where $c_2:=\frac{c^p}p + A_1C_1 |\Omega|$. 
	Therefore
	\begin{equation}\label{eq17}
		\max_{0\leqslant l \leqslant c\lambda ^{-r}}  E_{\lambda}(l \phi)   \leqslant c_2 \lambda^{-rp}.
	\end{equation}
	From Lemmas \ref{lema1} and \ref{lema2},  and the Mountain Pass Theorem for each $\lambda \in (0, \lambda_3 ) $ there exist $u_\lambda \in W_0^{s,p}(\Omega)$ such that $E'_\lambda (u_\lambda)=0$.
	Furthermore, this critical point is characterized by
	\begin{equation}\label{E-min-max}
		E_{\lambda}(u_{\lambda})= \min_{\gamma \in \Gamma} \max_{0 \leqslant t \leqslant 1} E(\gamma(t)).
	\end{equation}
	where $\Gamma$ is the set of continuous functions $\gamma:[0,1] \to W_0^{s,p}(\Omega)$ with $\gamma(0)=0$, $\gamma(1)=c\lambda ^{-r}\varphi$. Moreover, from \eqref{eq17}, \eqref{E-min-max} and Lemma \ref{lema2} we see that
	\begin{equation*}
		c_1\tau^p \lambda^{-rp} \leqslant E_\lambda(u_\lambda) \leqslant c_2 \lambda^{-rp} .
	\end{equation*}
	Note that $c_1$ $c_2 $ are independent of $\lambda$.
\end{proof}

\begin{remark}\label{upper-bound-||u||}
	There exists a constant $C>0$ such that for all $0<\lambda <\lambda_3$
	\begin{equation}
		\|u_\lambda\| \leqslant C\lambda^{-r}.
	\end{equation}
	In fact, since $u_\lambda$ is a critical point of $E_\lambda$, then
	\[\|u_\lambda \|^p=\lambda \int_\Omega f(u_\lambda)u_\lambda \D x .\]
	From the Ambrosetti-Rabinowitz condition  and Lemma \ref{critical-point} we see that
	\begin{eqnarray*}
		\bigg( \frac{1}p-\frac{1}{\theta} \bigg)\|u_\lambda \|^p &\leqslant& \frac{1}p\|u_\lambda \|^p -\frac{\lambda}{\theta} \int_\Omega f(u_\lambda) u_\lambda \D x +\frac{\lambda}{\theta} M|\Omega| \\
		&\leqslant& \frac{1}p\|u_\lambda \|^p -\lambda \int_\Omega F(u_\lambda)  \D x \\
		&=&E_\lambda (u_\lambda) \\ &\leqslant& c_2 \lambda^{-rp}.
	\end{eqnarray*}
\end{remark}

\begin{lemma}
	There exist $\alpha \in (0, s]$ and a constant $C>0$ such that for all $0<\lambda <\lambda_3$, the solution  $u_\lambda$  of the problem \eqref{eq1} satisfies $u_\lambda /\D _\Omega ^s \in C^\alpha (\overline{\Omega)}$ and 
	\[ \bigg\| \frac{u_\lambda}{\D ^s_\Omega}\bigg\|_{C^{\alpha}(\overline{\Omega})} \leqslant C \lambda^{-r} . \]    
\end{lemma}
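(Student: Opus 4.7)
The plan is to remove the $\lambda$-dependence by rescaling, derive a uniform $L^\infty$-bound on the rescaled solution, apply a known boundary regularity theorem for the fractional $p$-Laplacian, and finally scale back; the exponent $r=1/(q+1-p)$ is chosen precisely so that the rescaled problem admits bounds independent of $\lambda$.

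Concretely, I would set $v_\lambda := \lambda^r u_\lambda$. Using the $(p-1)$-homogeneity of $(-\Delta)_p^s$ and the identity $r(p-1)+1 = rq$, one verifies that $v_\lambda$ weakly solves $(-\Delta)_p^s v_\lambda = g_\lambda$ in $\Omega$ with $v_\lambda = 0$ in $\mathbb{R}^N\setminus\Omega$, where $g_\lambda := \lambda^{rq}\, f(\lambda^{-r} v_\lambda)$. Assumption \eqref{eqn4}, extended to all of $\mathbb{R}$ via the continuity of $f$ on $[-1,0]$, yields $|f(t)| \leq C(|t|^q+1)$ for every $t\in\mathbb{R}$, hence $|g_\lambda| \leq C(|v_\lambda|^q + \lambda^{rq}) \leq C(|v_\lambda|^q+1)$ uniformly in $\lambda\in(0,\lambda_3)$, since $rq>0$ and $\lambda_3<1$. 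Moreover, Remark \ref{upper-bound-||u||} gives the uniform Gagliardo-seminorm bound $\|v_\lambda\|=\lambda^r\|u_\lambda\|\leq C$.

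Next, since $q<p_s^*-1$, a Moser-type iteration for the fractional $p$-Laplacian applied to the equation for $v_\lambda$ would upgrade the seminorm bound to $\|v_\lambda\|_{L^\infty(\Omega)}\leq C$, so that $\|g_\lambda\|_{L^\infty(\Omega)}\leq C$ as well, with constants independent of $\lambda$. With $\Omega$ of class $C^{1,1}$ and $g_\lambda\in L^\infty(\Omega)$ uniformly bounded, I would then invoke the fine boundary regularity theorem of Iannizzotto-Mosconi-Squassina for $(-\Delta)_p^s$, obtaining $\alpha\in(0,s]$ and $C>0$, independent of $\lambda$, with
\[
\bigg\|\frac{v_\lambda}{\D_\Omega^s}\bigg\|_{C^\alpha(\overline{\Omega})} \leq C\,\|g_\lambda\|_{L^\infty(\Omega)}^{1/(p-1)} \leq C.
\]
Undoing the rescaling via $u_\lambda = \lambda^{-r}v_\lambda$ then yields $\|u_\lambda/\D_\Omega^s\|_{C^\alpha(\overline{\Omega})}\leq C\lambda^{-r}$, as required.

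The most delicate step is the uniform $L^\infty$-bound. Working directly with $u_\lambda$, the Moser iteration constants would depend on $\lambda$ both through the coefficient multiplying $f$ and through $\|u_\lambda\|\lesssim\lambda^{-r}$, and these dependences would have to be tracked carefully along each iteration step. The rescaling bypasses this difficulty because, after passing to $v_\lambda$, both the growth inequality $|g_\lambda|\leq C(|v_\lambda|^q+1)$ and the seminorm bound $\|v_\lambda\|\leq C$ are $\lambda$-independent, so the classical iteration schemes for the fractional $p$-Laplacian may be applied essentially as a black box.
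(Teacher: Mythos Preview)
Your argument is correct and follows the same three-step skeleton as the paper: use Remark~\ref{upper-bound-||u||} for an a priori $W_0^{s,p}$ bound, bootstrap to an $L^\infty$ bound, then invoke the Iannizzotto--Mosconi--Squassina boundary regularity theorem. The genuine difference is packaging: the paper works directly with $u_\lambda$ and tracks the powers of $\lambda$ by hand through each inequality, using the identity $-r=(1-rq)/(p-1)$ at the end, whereas your rescaling $v_\lambda=\lambda^r u_\lambda$ absorbs all $\lambda$-dependence up front so that every subsequent estimate is uniform. This buys clarity and makes the constants manifestly $\lambda$-free. A second, more substantive difference is the $L^\infty$ step: the paper does it in a single shot via Lemma~2.3 of \cite{MPSY}, choosing $t>N/(sp)$ with $tq<p_s^*$, and this is exactly where the hypothesis $q<\frac{sp}{N}p_s^*$ of the paper enters. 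Your appeal to a full Moser iteration would in principle only require $q<p_s^*-1$, so your route is potentially more general, though you leave that step as a black box rather than citing a specific reference.
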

\begin{proof}
	Let $t$ be such that $\frac{N}{sp}<t$ and $tq<p_s^*$ and $g:=\lambda f\circ u_\lambda .$ Since $W_0^{s,p}(\Omega) \subseteq L^{tq}(\Omega)$ and  $|g|\leqslant A_1 \lambda (|u_\lambda|^{q}+1)$, then $g\in L^{t}(\Omega)$. According to Lemma 2.3 from \cite{MPSY}, 
	\begin{equation}\label{eq1-lemma4}
		\|u_\lambda\|_\infty \leqslant \|g\|_t^{\frac{1}{p-1}} .\end{equation} 
	But taking into account the Remark \ref{upper-bound-||u||}, we have
	\[  \|g\|_t \leqslant C\lambda \|u_\lambda\|_{tq}^q \leqslant C\lambda \|u_\lambda\|^q \leqslant    C\lambda^{1-rq}.\]
	Therefore, from \eqref{eq1-lemma4} and $-r=(1-rq)/(p-1)$, we see that
	\begin{equation}\label{upper-bound-u-infty}
		\|u_\lambda\|_\infty \leqslant C \lambda^{-r}.
	\end{equation}
	Since $u_\lambda \in L^{\infty}(\Omega)$ then $g \in L^\infty(\Omega)$. From Theorem 1.1. in \cite{IMS1}, we see that there exists $\alpha \in (0, s]$ and $C>0$, depending only on $N,p,s$ and $\Omega$, such that the solution $u_\lambda $ satisfies $u_\lambda /\D _\Omega ^s \in C^\alpha (\overline{\Omega})$ and 
	\[\bigg\| \frac{u_\lambda}{\D ^s_\Omega}\bigg\|_{C^{\alpha}(\overline{\Omega})} \leqslant C\| \lambda f(u_\lambda ) \|^\frac{1}{p-1}_{\infty} \leqslant \lambda^{-r},\]
	where the last inequality was obtained taking into account \eqref{upper-bound-u-infty}, the growing condition of $f$ and that $1-rq=-r(p-1)$. 
\end{proof}

\begin{lemma}\label{lema3}
	Let  $  u_\lambda$ be a weak solution of \eqref{eq1}. Then there exists a constant $C$ such that for all $0<\lambda<\lambda_3 $
	\[\ C \lambda^{-r} \leqslant \| u_\lambda\|_\infty .\]
\end{lemma}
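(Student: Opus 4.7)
The plan is to exploit the lower bound on the mountain pass level $E_\lambda(u_\lambda) \geqslant c_1 \tau^p \lambda^{-rp}$ from Lemma \ref{critical-point}, combined with a crude $L^\infty$ estimate on the nonlinear term, to force $\|u_\lambda\|_\infty$ to be large. The idea is that a small solution would make the energy too small.

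First I would use that $u_\lambda$ is a critical point, i.e. $\langle E'_\lambda(u_\lambda),u_\lambda\rangle = 0$, which gives $\|u_\lambda\|^p = \lambda\int_\Omega f(u_\lambda)u_\lambda\,\D x$. Substituting into the definition of $E_\lambda$ yields the identity
\[
E_\lambda(u_\lambda) = \lambda \int_\Omega \left( \tfrac{1}{p} f(u_\lambda)u_\lambda - F(u_\lambda) \right) \D x .
\]
Next I would show that the integrand is bounded pointwise by $C(|u_\lambda|^{q+1}+1)$. For $s\geqslant 0$ this follows directly from \eqref{eqn4} and \eqref{upper_bound_F}. For $s\leqslant 0$, since $f(s)=0$ on $(-\infty,-1]$ and $f$ is continuous on the compact set $[-1,0]$, both $f(s)s$ and $F(s)$ are bounded by a constant; combining cases gives $|\tfrac{1}{p}f(s)s - F(s)| \leqslant C(|s|^{q+1}+1)$ for all $s\in\mathbb{R}$.

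Combining the two previous steps with the trivial estimate $\int_\Omega |u_\lambda|^{q+1}\D x \leqslant |\Omega|\,\|u_\lambda\|_\infty^{q+1}$ and the lower bound from Lemma \ref{critical-point}, we obtain
\[
c_1\tau^p \lambda^{-rp} \leqslant E_\lambda(u_\lambda) \leqslant C \lambda \left( \|u_\lambda\|_\infty^{q+1} + 1 \right) |\Omega|.
\]
For $\lambda$ small enough (shrinking $\lambda_3$ if necessary so that $C\lambda |\Omega| \leqslant \tfrac{1}{2}c_1\tau^p \lambda^{-rp}$, i.e. $\lambda^{1+rp}\leqslant c_1\tau^p/(2C|\Omega|)$), the additive $+1$ is absorbed, leaving
\[
\|u_\lambda\|_\infty^{q+1} \geqslant C' \lambda^{-rp-1} = C' \lambda^{-r(q+1)},
\]
since by the very definition of $r=1/(q+1-p)$ one has $r(q+1)=rp+1$. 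Taking the $(q+1)$-th root yields $\|u_\lambda\|_\infty\geqslant C\lambda^{-r}$.

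The step that requires the most care is the bookkeeping for negative values of $u_\lambda$, since the given growth \eqref{eqn4} is one-sided; however the semipositone hypothesis $f\equiv 0$ on $(-\infty,-1]$ resolves this issue immediately. Once the uniform bound $|\tfrac{1}{p}f(s)s-F(s)|\leqslant C(|s|^{q+1}+1)$ is secured, the remainder is only an algebraic check that the exponents match, which is exactly the purpose of the choice of $r$.
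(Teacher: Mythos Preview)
Your proof is correct and follows essentially the same approach as the paper: both combine the lower bound $E_\lambda(u_\lambda)\geqslant c_1\tau^p\lambda^{-rp}$ from Lemma~\ref{critical-point}, the critical-point identity $\|u_\lambda\|^p=\lambda\int_\Omega f(u_\lambda)u_\lambda\,\D x$, and the growth conditions on $f$ to force $\|u_\lambda\|_\infty^{q+1}\gtrsim \lambda^{-rp-1}=\lambda^{-r(q+1)}$. The only cosmetic difference is that the paper first bounds $\lambda\int_\Omega f(u_\lambda)u_\lambda\,\D x$ from below (using $\min F>-\infty$) and then from above, whereas you bound the combination $\tfrac{1}{p}f(s)s-F(s)$ in one stroke; the content is the same.
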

\begin{proof}
	From Lemma \ref{critical-point} there exists $c_1$ such that $c_1\lambda^{-rp} \leqslant E_\lambda (u_\lambda)$. Moreover, since $\min F >- \infty$ then 
	\begin{align}\label{est-inf1-u}
		\begin{split}
			\lambda \int_\Omega f(u_\lambda) u_\lambda\D x &= \|u_\lambda\|^p \\
			&=pE_\lambda (u_\lambda)+p\lambda \int_\Omega F(u_\lambda)\D x \\
			&\geqslant pc_1 \lambda ^{-rp} +p |\Omega| \lambda \min F \\
			&\geqslant C_1 \lambda ^{-rp},
		\end{split}
	\end{align}
	for some $C_1>0$. On the other hand, observe from \eqref{eqn4} that there exists $B_2>0$ such that for all $s\in \mathbb{R}$, $f(s)s\leqslant B_2(|s|^{q+1}+|s|)$. Thus
	\begin{align}\label{est-inf2-u}
		\begin{split}
			\lambda \int_\Omega f(u_\lambda) u_\lambda\D x &\leqslant  B_2 \lambda \int_\Omega (|u_\lambda|^{q+1}+|u_\lambda|)\D x \\
			&\leqslant B_2\lambda \int_\Omega (\|u_\lambda\|_\infty^{q+1}+\|u_\lambda\|_\infty)\D x  \\
			&\leqslant  B \lambda \|u_\lambda\|_\infty^{q+1},
		\end{split}
	\end{align}
	for some $B>0$. From \eqref{est-inf1-u} and \eqref{est-inf2-u} we obtain the result. 
\end{proof}
Finally we prove the Main Theorem.  

\begin{proof}[Proof of the Main Theorem]
	Arguing by contradiction, let $\{\lambda_{j}\}$ a sequence of positive numbers such that $\lambda _j \to 0$, as $j \to  \infty $ and such that $|\{ x \in \Omega : \ \ u_{\lambda_ j}(x) \leq 0 \}|>0.$
	Let $ w_j:=\frac{u_{\lambda _j}}{\|u_{\lambda _j}\|_\infty}.$
	Then
	\[(-\Delta )_p^s(w_j)=\lambda _j f(u_{\lambda_j}) \|u_{\lambda_j}\|_\infty ^{1-p}.\]
	By Lemma \ref{lema3} and Theorem 1.1 of \cite{IMS1}, there exists $\alpha \in (0,s]$ such that
	\[\bigg\|\frac{w_j}{\D^s_\Omega }\bigg\|_{C^\alpha(\overline{\Omega})} \leqslant \|\lambda _j f(u_{\lambda_j}) \|u_{\lambda_j}\|_\infty ^{1-p}\|_\infty ^{\frac{1}{p-1}} \leq C,\]
	where $C$ does not dependent on $\lambda_j$. Let us choose any $0< \beta <\alpha$.  Since $C^{\alpha} (\overline{\Omega})\subset \subset C^{\beta} (\overline{\Omega})$ (see Theorem 5.14, \cite{A}) then, up to a sub-sequence, $\lim_{j\to\infty}  \frac{w_j}{\D^s_\Omega }=\frac{w}{\D^s_\Omega }$ in $C^\beta(\overline{ \Omega})$.  
	Now, we will use comparison principle to prove that $w(x)\geqslant 0$.
	Let $v_0\in W_0^{s,p}(\Omega)$ be the solution of 
	\[	\displaystyle \left\{\begin{array}{rcll}
		(-\Delta)_p^s u&=&1, \qquad &\text{in $\Omega$}\\
		u&=&0,  &\text{in $\mathbb{R}^N-\Omega$}.
	\end{array}\right.\]
	Let $K_j=\frac{\lambda_j}{\|u_{\lambda_j}\|^{p-1}_{\infty}}\min_{t\in \mathbb{R}} f(t).$  Observe that $K_j<0$.
	Then, the solution $v_j\in W_0^{s,p}(\Omega)$ of 
	\[\displaystyle \left\{\begin{array}{rcll}
		(-\Delta)_p^s u&=&K_j, \qquad &\text{in $\Omega$}\\
		u&=&0,  &\text{in $\mathbb{R}^N-\Omega$},
	\end{array}\right.\]
	is given by $v_j=-(-K_j)^{1/(p-1)}v_0$.
	Since $\lambda _j f(u_{\lambda_j}) \|u_{\lambda_j}\|_\infty ^{1-p} \geqslant K_j$. By the comparison principle stated in \cite{IMS} (Proposition 2.10) $w_j\geqslant v_j$. 
	Since $v_j\to 0$, as $j\to \infty$, then  $w (x)\geqslant 0$.\\
	Let us observe that since  $\{\lambda _j f(u_{\lambda_j}) \|u_{\lambda _j}\|_\infty ^{1-p}\}_j$
	is bounded by a constant independent of $\lambda_j$, then there exists $t>1$ such that  $\{\lambda _j f(u_{\lambda_j}) \|u_{\lambda_j}\|_\infty ^{1-p}\}_j$ is bounded in $L^t(\Omega)$. Thus, we may assume  that it converges weakly in $L^t(\Omega)$. Let $z:=\lim_{j\rightharpoonup 0 } \lambda _j f(u_{\lambda_j}) \|u_{\lambda_j}\|_\infty ^{1-p}$, its weak limit.
	Since $f$ is bounded from below and  $\lim_{j \to \infty} \lambda _j \|u_{\lambda_j}\|_\infty ^{1-p} =0,$ then $z \geqslant 0.$ We claim that $(-\Delta)_p^s(w)=z$. In fact, from remark \ref{upper-bound-||u||} and Lemma \ref{lema3}, the sequence of functions 
	\[\psi_j(x,y):=\frac{|w_j(x)-w_j(y)|}{|x-y|^{\frac{N}{p}+s}},\] is bounded in $L^p(\mathbb{R}^{2N})$. Therefore, following the same procedure made in Lemma \ref{critical-point} to prove the strong convergence of $\{u_n\}$ (see Lemma \ref{app-Lema_2} in the appendix), we conclude that it converges to 
	\[\psi(x,y):=\frac{|w(x)-w(y)|}{|x-y|^{\frac{N}{p}+s}},\] in $L^{p}(\mathbb{R}^{2N}).$ Then there exists $h\in L^{p}(\mathbb{R}^{2N})$ such that $|\psi_j (x,y)| \leqslant h(x,y)$, a.e. $(x,y)$. Hence, from the Young's inequality, for all $\varphi \in W^{s,p}_0(\Omega)$ we have 
	\begin{eqnarray*}
		\frac{|w_j(x)-w_j(y)|^{p-1}|\varphi(x)-\varphi(y)|}{|x-y|^{N+sp}} &= & \frac{|w_j(x)-w_j(y)|^{p-1}|\varphi(x)-\varphi(y)|}{|x-y|^{\frac{N+sp}{p'}}|x-y|^\frac{N+sp}{p}} \\
		& \leqslant & \frac{1}{p'} \frac{|w_j(x)-w_j(y)|^{(p-1)p'}}{|x-y|^{N+sp}} + \frac{1}{p}\frac{|\varphi(x)-\varphi(y)|^p}{|x-y|^{N+sp}}\\
		&\leqslant & \frac{1}{p'} (h(x,y))^p + \frac{1}{p}\frac{|\varphi(x)-\varphi(y)|^p}{|x-y|^{N+sp}},
	\end{eqnarray*}
	where $p'$ stands for the conjugate Hölder exponent of $p$.
	Since the last function belongs to $L^1(\mathbb{R}^{2N})$, by the Lebesgue Dominated Convergence Theorem we have
	\begin{align}\label{w-weak-sol}
		\begin{split}
			& \int_{\mathbb{R}^{2N}  }  \frac{|w(x)-w(y)|^{p-2}(w(x)-w(y))(\varphi (x)-\varphi (y))}{|x-y|^{N+sp}}  \D x \, \D y
			\\
			& = \lim_{j \to \infty} \int_{\mathbb{R}^{2N} }  \frac{|w_j(x)-w_j(y)|^{p-2}(w_j(x)-w_j(y))(\varphi (x)-\varphi (y))}{|x-y|^{N+sp}}  \D x \, \D y \\
			&  = \lim_{j \to \infty} \int_{\Omega }  \lambda _j f(u_{\lambda_j}(x)) \|u_{\lambda_j}\|_\infty ^{1-p} \varphi (x) \D x \\
			& = \int_{\Omega }  z(x) \varphi (x) \D x .
		\end{split}
	\end{align}
	Observe that we also proved that $w_j\to w$ in $W_0^{s,p} (\Omega)$, and thus $w\in W_0^{s,p} (\Omega)$. This proves the claim. Thus $w$ is  a supersolution of the $(-\Delta)_p^s(w)=0$ in $\Omega$. Since $\Omega$ has $C^{1,1}$ boundary then it satisfies the interior ball condition (see Theorem 1.0.9 in \cite{T}). Therefore, by Theorems 1.4 and 1.5 of \cite{PQ} we have $w>0$ in $\Omega$ and  for all $x_0 \in \partial \Omega$, 
	\[
	\liminf_{ x\to x_0} \frac{w(x)}{\D^s_{B_R(x)}}>0,
	\]
	where $B_R\subseteq \Omega$ and $x_0\in \partial B_R $.
	From Lemma \ref{lemma-hopf2} (see appendix), there exists $j$ sufficiently large such that $w_j >0$ in $\Omega .$ Absurd. 
\end{proof}

\section{Appendix}

In this section we shall prove some technical results. The first one is based on the Hopf's Lemma established in \cite{PQ}. The second, follows the same lines in part of the proof of Lemma \ref{critical-point}.
\begin{lemma}\label{lemma-hopf2}
	Let us assume that $\Omega\subseteq \mathbb{R}^N$ is bounded domain with $C^{1,1}$ boundary and $\frac{w_j}{\D _\Omega ^s} \to \frac{w}{\D _\Omega ^s}$ in $C^\beta\big(\overline{\Omega}\big)$ with $w(x)=w_j(x)=0$, for all $j$ and all $x\in \partial\Omega$. Let us assume that $w>0$ in $\Omega$ and for all $x_0\in \partial \Omega$ 
	\begin{equation}\label{hopf1}
		m:=\liminf_{ x\to x_0} \frac{w(x)}{\D^s_{B_R}(x)}>0 . 
	\end{equation}
	Then there exists 
	$j$ such that $w_j(x)>0$ for all $x\in \Omega$.
\end{lemma}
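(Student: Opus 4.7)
The plan is to use the $C^\beta$ convergence $\rho_j:=w_j/\D^s_\Omega\to\rho:=w/\D^s_\Omega$, which in particular is uniform on $\overline{\Omega}$, to promote a strict-positivity lower bound for $\rho$ into the same kind of bound for $\rho_j$. More precisely, I would prove the existence of $\mu>0$ with $\rho(x)\geqslant\mu$ on $\overline{\Omega}$; choosing then $j$ with $\|\rho_j-\rho\|_\infty<\mu/2$ yields $\rho_j\geqslant\mu/2>0$ on $\overline{\Omega}$, so $w_j>0$ in $\Omega$ because $\D^s_\Omega>0$ there.

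For the boundary part of the lower bound I would combine the Hopf-type assumption \eqref{hopf1} with the $C^{1,1}$ regularity of $\partial\Omega$. At each $x_0\in\partial\Omega$, the uniform interior ball condition produces a ball $B_R\subseteq\Omega$ of a fixed radius $R>0$ tangent to $\partial\Omega$ at $x_0$; since $\partial\Omega$ and $\partial B_R$ share the tangent hyperplane at $x_0$ and are both $C^{1,1}$, the distance functions $\D_{B_R}$ and $\D_\Omega$ are comparable in a neighborhood of $x_0$. This converts \eqref{hopf1} into $\liminf_{x\to x_0}\rho(x)>0$, which, by continuity of $\rho$, means $\rho(x_0)>0$. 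Since $\partial\Omega$ is compact, continuity and uniformity of $R$ over $\partial\Omega$ produce $\delta,\mu_1>0$ with $\rho\geqslant\mu_1$ on the strip $\{x\in\overline{\Omega}:\D_\Omega(x)<\delta\}$.

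On the complement $K=\{x\in\overline{\Omega}:\D_\Omega(x)\geqslant\delta\}$, a compact subset of $\Omega$, the function $w$ is continuous and strictly positive, hence $w\geqslant\mu_2>0$ on $K$; since $\D^s_\Omega$ is bounded above on $\overline{\Omega}$, this gives $\rho\geqslant\mu_3>0$ on $K$. Taking $\mu:=\min\{\mu_1,\mu_3\}$ completes the lower bound, and the opening observation finishes the proof.

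The main obstacle is the geometric comparison that turns \eqref{hopf1}, which is stated in terms of $\D_{B_R}$, into a uniform estimate in terms of $\D_\Omega$ near the boundary. This is where the $C^{1,1}$ regularity of $\partial\Omega$ is essential: it provides both a uniform lower bound on the interior-ball radius $R$ and the quantitative control needed to make the comparison constant between $\D_{B_R}$ and $\D_\Omega$ independent of the boundary point, so that the local positivity statements at each $x_0\in\partial\Omega$ can be patched into a single $\mu_1>0$ valid on the whole boundary strip.
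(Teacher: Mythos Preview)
Your overall strategy---show $\rho:=w/\D_\Omega^s$ has a positive minimum on $\overline{\Omega}$, then use uniform convergence---is exactly the paper's. There is, however, a genuine gap in the boundary step. The claim that ``$\D_{B_R}$ and $\D_\Omega$ are comparable in a neighborhood of $x_0$'' is false: since $B_R\subseteq\Omega$ one always has $\D_{B_R}\leqslant\D_\Omega$, and the ratio $\D_\Omega/\D_{B_R}$ blows up along $\partial B_R$ near $x_0$. (Take $\Omega$ a half-space and $B_R$ the tangent ball; for points on $\partial B_R$ one has $\D_{B_R}=0$ while $\D_\Omega>0$.) So the two-sided comparison you invoke to pass from \eqref{hopf1} to $\liminf_{x\to x_0}\rho(x)>0$ is not available.

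The fix is simple and is what the paper does: approach $x_0$ along the inward normal (the segment from $x_0$ to the center of $B_R$). Along that segment $\D_{B_R}(x)=\D_\Omega(x)$ exactly, so \eqref{hopf1} gives $w(x_n)/\D_\Omega^s(x_n)\geqslant m/2$ for a sequence $x_n\to x_0$, and continuity of $\rho$ yields $\rho(x_0)\geqslant m/2>0$. Once $\rho>0$ on $\partial\Omega$ and in $\Omega$, compactness of $\overline{\Omega}$ and continuity of $\rho$ give $\mu:=\min_{\overline{\Omega}}\rho>0$ directly; your subsequent decomposition into a boundary strip and an interior compact set, and the appeal to uniformity of $R$, are unnecessary.
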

\begin{proof}
	First of all, let us emphasize that, since $\frac{w}{\D _\Omega ^s} \in C^\beta\big(\overline{\Omega}\big)$, then for all $x_0\in \partial\Omega$, $\frac{w(x_0)}{\D ^s_\Omega (x_0)}$ is well defined in terms of limits.  
	Now, let  $B_R \subseteq \Omega$ be an interior ball such that $x_0\in \partial B_R$ and let be $\varepsilon_0 >0$ such that for all $x\in B_R\cap B(x_0, \varepsilon_0)$, 
	\[\frac{w(x)}{\D ^s_{B_R} (x)}> \frac{m}2.\]
	Let us pick up a sequence $\{x_n\} $ in $B_R\cap B(x_0, \varepsilon_0)$ in the segment joining $x_0$ and the center of $B_R$ and such that $x_n \to x_0$. So that for all $n$, $x_n-x_0$ is orthogonal to $\partial B_R$ and $\partial \Omega$ and $\D_{B_R}(x_n)=\D_{\Omega}(x_n)$. Therefore
	\[\frac{w(x_0)}{\D^s_\Omega(x_0)}=\lim_{ n\to\infty} \frac{w(x_n)}{\D^s_\Omega(x_n)}=\lim_{ n\to\infty} \frac{w(x_n)}{\D^s_{B_R}(x_n)}\geqslant \frac{m}2>0 .\]
	And, obviously, $\frac{w(x)}{\D^s_\Omega(x)}>0$
	for all $x\in \Omega$. Thus $\frac{w}{\D^s_\Omega}$ is positive in the compact 
	$\overline{\Omega}.$  Let \begin{equation}\label{Hopf2}
		\varepsilon:=\min \frac{w}{\D^s_\Omega} >0.
	\end{equation} 
	Let $\Omega_1$ be a nonempty open set such that $\overline{\Omega}_1\subseteq\Omega$.  
	We claim that there exists $j$ such that for all $x\in \overline{\Omega_1}$, $w_j(x)>0$. Indeed, there exists $j$ sufficiently large such that 
	\[\bigg\|\frac{w}{\D _\Omega ^s}-\frac{w_j}{\D _\Omega ^s}\bigg\|_{C^\beta(\overline{\Omega})} <\frac{\epsilon}{2} .\]
	In particular for all $x\in \overline{\Omega}_1$
	\[-\frac{\epsilon}{2}\leqslant \frac{w_j(x)}{\D _\Omega ^s(x)}-\frac{w(x)}{\D _\Omega ^s(x)} . \]
	Then, for all $x\in \overline{\Omega}_1$
	\[\frac{\epsilon}{2}\leqslant \frac{w(x)}{\D _\Omega ^s(x)}-\frac{\epsilon}{2} <\frac{w_j(x)}{\D _\Omega ^s(x)}.\]
	Which proves the claim. Finally, we will prove that for all $x\in \Omega-\overline{\Omega_1}$, $w_j(x)>0$. Let us argue by contradiction. If there exists $x_0\in \Omega-\overline{\Omega_1}$ such that $w_j(x_0)\leq 0$, then, by the intermediate Value Theorem, there is $z_0\in \Omega-\overline{\Omega_1}$ such that $w_j(z_0)=0$. Thus, from \eqref{Hopf2} and the definition of $\varepsilon_1$, we have 
	\begin{eqnarray*}
		\epsilon  & \leqslant & \bigg| \frac{w(z_0)}{\D _\Omega ^s(z_0)}-\frac{w_j(z_0)}{\D _\Omega ^s(z_0)}\bigg| \\
		&\leqslant & \bigg\|\frac{w}{\D _\Omega ^s}-\frac{w_j}{\D _\Omega ^s}\bigg\|_{C^\beta(\overline{\Omega})} <\frac{\epsilon}{2} .
	\end{eqnarray*}
	Absurd.
\end{proof}

\begin{lemma}\label{app-Lema_2}
	Let $\{w_j\}$ be a bounded sequence in $W_0^{s,p}(\Omega)$, such that
	\[\displaystyle \left\{\begin{array}{rcll}
		(-\Delta)_p^s (w_j)&=&\lambda_j g(w_j) ~\textrm{in} \qquad& \Omega\\
		w_j(x)&=&0~~~~~~~~~\textrm{in}& \mathbb{R}^N-\Omega ,
	\end{array}\right.\]	
	with  $\{\lambda_jg(w_j)\}$ bounded in $L^\infty(\Omega)$.
	Then $w_j$ converges strongly in $W_0^{s,p}(\Omega)$.
\end{lemma}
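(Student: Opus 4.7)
The plan is to mimic verbatim the Palais--Smale argument already performed inside the proof of Lemma \ref{critical-point}. Since $\{w_j\}$ is bounded in the reflexive Banach space $W_0^{s,p}(\Omega)$, after extracting a subsequence (not relabeled) there is $w\in W_0^{s,p}(\Omega)$ with $w_j\rightharpoonup w$ weakly in $W_0^{s,p}(\Omega)$. By the compact fractional Sobolev embedding (Rellich--Kondrachov type, see \cite{DPV}), we may assume $w_j\to w$ strongly in $L^1(\Omega)$. Test the equation with $\varphi=w_j-w\in W_0^{s,p}(\Omega)$ to obtain
\[
\int_{\mathbb{R}^{2N}}\frac{\Phi_p(w_j(x)-w_j(y))\bigl((w_j-w)(x)-(w_j-w)(y)\bigr)}{|x-y|^{N+sp}}\D x\,\D y \;=\;\lambda_j\int_{\Omega} g(w_j)(w_j-w)\D x.
\]
The right-hand side tends to $0$: since $\{\lambda_j g(w_j)\}$ is bounded in $L^\infty(\Omega)$ by some $M>0$, Hölder's inequality gives $|\lambda_j\int_\Omega g(w_j)(w_j-w)\D x|\leq M\|w_j-w\|_{L^1(\Omega)}\to 0$.

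Next, since the linear functional $\varphi\mapsto \int_{\mathbb{R}^{2N}}\Phi_p(w(x)-w(y))(\varphi(x)-\varphi(y))|x-y|^{-N-sp}\D x\D y$ is continuous on $W_0^{s,p}(\Omega)$, the weak convergence $w_j\rightharpoonup w$ also forces
\[
\int_{\mathbb{R}^{2N}}\frac{\Phi_p(w(x)-w(y))\bigl((w_j-w)(x)-(w_j-w)(y)\bigr)}{|x-y|^{N+sp}}\D x\,\D y\;\longrightarrow\;0.
\]
Subtracting and using exactly the same expansion and Hölder estimate on the two cross terms as in Lemma \ref{critical-point} yields
\[
0\leftarrow\int_{\mathbb{R}^{2N}}\frac{\bigl[\Phi_p(w_j(x)-w_j(y))-\Phi_p(w(x)-w(y))\bigr]\bigl((w_j-w)(x)-(w_j-w)(y)\bigr)}{|x-y|^{N+sp}}\D x\,\D y\;\geq\;\bigl(\|w_j\|^{p-1}-\|w\|^{p-1}\bigr)\bigl(\|w_j\|-\|w\|\bigr)\geq 0.
\]
Hence $\|w_j\|\to\|w\|$, and since $W_0^{s,p}(\Omega)$ is uniformly convex (as $p>1$), weak convergence together with convergence of norms gives strong convergence $w_j\to w$ in $W_0^{s,p}(\Omega)$.

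The only non-routine ingredient is the monotonicity/Hölder estimate leading to the nonnegative lower bound $(\|w_j\|^{p-1}-\|w\|^{p-1})(\|w_j\|-\|w\|)$, and this is precisely the computation already carried out in the Palais--Smale part of Lemma \ref{critical-point}; no new idea is required here. The hypothesis that $\{\lambda_j g(w_j)\}$ is bounded in $L^\infty(\Omega)$ is in fact stronger than what is needed (any $L^{(q+1)'}$ bound with $q+1<p_s^*$ would suffice for the right-hand side to vanish via the compact embedding), so the argument is robust.
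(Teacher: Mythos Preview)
Your proof is correct and follows essentially the same approach as the paper's own proof: extract a weakly convergent subsequence, use a compact Sobolev embedding together with the $L^\infty$ bound on $\{\lambda_j g(w_j)\}$ to kill the right-hand side when testing with $w_j-w$, invoke weak convergence for the cross term, and then apply the monotonicity/H\"older lower bound from Lemma~\ref{critical-point} to deduce $\|w_j\|\to\|w\|$. The only cosmetic difference is that you use the $L^1$ embedding paired with the $L^\infty$ bound, whereas the paper uses the $L^{q+1}$ embedding; both are valid.
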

\begin{proof}
	Since $\{w_j\}$ is bounded in $W_0^{s,p}(\Omega)$, then, up to a sub-sequence, $\{w_j\}$ converges weakly to the function $v\in W_0^{s,p}(\Omega)$. Since $p<q+1<p_s^*$, then $w_j \to v$ (strongly) in $L^{q+1}(\Omega)$. As $\{\lambda_jg(w_j)\}$ bounded in $L^\infty(\Omega)$, applying the Hölder inequality this implies that
	\begin{equation*}\label{append-eq12}
		\lim_{j\to \infty} \lambda_j\int_{\Omega}g(w_j)(w_j-v)\D x = 0.
	\end{equation*}
	Then, since $J'_{\lambda_j}(w_j)=0$ (where $J_\lambda$ is the associated Energy Functional to this problem), we have
	\begin{equation}\label{append-eq13}
		\lim_{j\to \infty} \int_{\mathbb{R}^{2N}} \frac{\Phi_p(w_j(x)-w_j(y))((w_j-v)(x)-(w_j-v)(y))}{|x-y|^{N+sp}}=0.
	\end{equation}
	Using again that $v$ is the weak limit of $w_j$ we have
	\begin{equation}\label{append-eq14}
		\lim_{j \to \infty}\int_{\mathbb{R}^{2N}} \frac{\Phi_p(v(x)-v(y))((w_j-v)(x)-(w_j-v)(y))}{|x-y|^{N+sp}}=0.
	\end{equation}
	Thus, from the same argument that we use in the proof of Lemma \ref{critical-point} we obtain 
	\begin{align*}
		& \int_{\Omega} \frac{\Phi_p(w_j(x)-w_j(y))-\Phi_p(v(x)-v(y))}{|x-y|^{N+sp}}   ((w_j-v)(x)-(w_j-v)(y)) \D x \D y  & \\
		&     \geqslant (\|w_j\|^{p-1}-\|v\|^{p-1})(\|w_j\|-\|v\|) \geqslant 0 .
	\end{align*}
	From (\ref{append-eq13}), (\ref{append-eq14})  we obtain  
	\begin{equation*}
		\lim_{j \to \infty} (\|w_j\|^{p-1}-\|v\|^{p-1})(\|w_j\|-\|v\|)=0 ,
	\end{equation*}
	which implies
	\begin{equation*}
		\lim_{j \to \infty}\|w_j\|=\|v\| .
	\end{equation*}
	Since  $w_j \rightharpoonup v$, then $w_j \to v$ strongly in $W_0^{s,p}(\Omega)$.
\end{proof}

\section*{Acknowledgment}

E. Lopera was partially supported by Facultad de Ciencias, Universidad Nacional de Colombia, sede Manizales, Hermes codes 55156 and 51894, and 100,000 Strong in the Americas, Innovation Fund.\\
C. López was partially supported by Facultad de Ciencias, Universidad Nacional de Colombia, sede Manizales, Hermes code 55156 and 100,000 Strong in the Americas, Innovation Fund.\\
R. E. Vidal was partially supported by CONICET, FONCyT and SECyT-UNC.

\end{document}